\documentclass{amsart}

\usepackage{amsmath,amssymb,amsthm}
\usepackage{tikz}

\numberwithin{equation}{section}

\newtheorem{theorem}{Theorem}[section]
\newtheorem{lemma}[theorem]{Lemma}

\theoremstyle{definition}
\newtheorem{definition}[theorem]{Definition}
\newtheorem{question}[theorem]{Question}
\newtheorem{problem}[theorem]{Problem}
\theoremstyle{remark}
\newtheorem{remark}[theorem]{Remark}

\newcommand{\chr}{\operatorname{chr}}
\newcommand{\restr}{\mathbin{\upharpoonright}}

\title[Uncountable Complementary Graphs]
{Hedetniemi's Conjecture for Uncountable Complementary Graphs}
\author{L. Soukup}
\address{HUN-REN Alfr{\'e}d R{\'e}nyi Institute of Mathematics,
Budapest, Hungary}
\email{soukup@renyi.hu}
\subjclass[2020]{Primary 05C15; Secondary 03E35, 05C63, 05C76}
\keywords{chromatic number, categorical graph product, graph complement,
Hedetniemi's conjecture, forcing, diamond principle, Countryman line}

\begin{document}

\begin{abstract}
We study the complementary case of Hedetniemi's problem: can a graph
$G$ and its complement both be uncountably chromatic while their
categorical product is countably chromatic?  We prove that every such
example must have exactly $\omega_1$ vertices.  Nevertheless, assuming
$\diamondsuit$, we construct a graph $G$ on $\omega_1$ such that
$\chr(G)=\chr(\overline G)=\omega_1$ and
$\chr(G\times\overline G)=\omega$.  We also give a c.c.c.\ forcing of
size $\omega_1$ that adds such a graph.  We ask whether ZFC alone proves
that such a graph exists.
\end{abstract}

\maketitle

\section{Introduction}

All graphs in this note are simple and undirected.  If
$G_j=(V_j,E_j)$ for $j<2$, their (categorical) product is
\begin{displaymath}
 G_0\times G_1=
 \left(V_0\times V_1,
 \left\{\{(a_0,a_1),(b_0,b_1)\}:
       \{a_0,b_0\}\in E_0\text{ and }\{a_1,b_1\}\in E_1
 \right\}\right).
\end{displaymath}
For a graph $G=(V,E)$, write
$\overline G=(V,[V]^2\setminus E)$.  The study of inequalities relating
a graph and its complement goes back to Nordhaus and Gaddum
\cite{NordhausGaddum1956}.

Pulling back a coloring along either coordinate projection gives
\begin{displaymath}
 \chr(G_0\times G_1)\leq
 \min\{\chr(G_0),\chr(G_1)\}.
\end{displaymath}
Hedetniemi conjectured in 1966 that equality holds for finite graphs
\cite{Hedetniemi1966}, but Shitov disproved the conjecture
\cite{Shitov2019}; see \cite{Sauer2001,Zhu2025} for surveys.  For
infinite graphs, equality can fail much more dramatically.  Hajnal
showed that for every infinite cardinal $\kappa$ there are graphs
$G_0,G_1$ such that
\begin{displaymath}
 \chr(G_0)=\chr(G_1)=\kappa^+
 \quad\text{and}\quad
 \chr(G_0\times G_1)=\kappa.
\end{displaymath}
Thus two $\omega_1$-chromatic graphs may have a countably chromatic
product \cite{Hajnal1985}.  Soukup obtained two $\omega_2$-chromatic
graphs with countably chromatic product in a model of GCH
\cite{Soukup1988}.  Rinot later showed that in the constructible
universe, for every infinite cardinal $\kappa$, there are graphs
$\langle G_i:i<\kappa^+\rangle$ on $\kappa^+$ such that
\begin{displaymath}
 \chr(G_i)=\kappa^+
 \quad\text{and}\quad
 \chr(G_i\times G_j)=\omega
 \qquad(i<j<\kappa^+).
\end{displaymath}
Their edge sets may moreover be chosen pairwise disjoint
\cite{Rinot2017}.  Consequently, for $i\ne j$, the complement of
$G_i\cup G_j$ contains every $G_k$ with $k\notin\{i,j\}$, and hence has
chromatic number $\kappa^+$.  This suggests looking at the opposite
extreme, where two edge-disjoint graphs cover every pair---that is,
where the factors are a graph and its complement.

The central question of this paper is how large the chromatic numbers
of $G$ and $\overline G$ can be while their product remains countably
chromatic.  In particular, can one have
\begin{equation}\label{eq:central}
 \chr(G),\chr(\overline G)>\omega
 \quad\text{and}\quad
 \chr(G\times\overline G)=\omega.                 \tag{$*$}
\end{equation}

The complementary requirement imposes a strong restriction.

\begin{theorem}\label{thm:size-bound}
If $G$ and $\overline G$ are uncountably chromatic and
$G\times\overline G$ is countably chromatic, then
$|V(G)|=\omega_1$.
\end{theorem}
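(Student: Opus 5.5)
The lower bound $|V(G)|\geq\omega_1$ is immediate, since a graph on at most $\omega$ vertices is countably chromatic. The real content is the upper bound. I will assume $|V(G)|=\kappa\geq\omega_2$ together with a coloring $c\colon V\times V\to\omega$ of $G\times\overline G$, and derive that $G$ or $\overline G$ is countably chromatic.

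Two observations drive the plan. First, the coordinate swap $(a,b)\mapsto(b,a)$ is an isomorphism $G\times\overline G\cong\overline G\times G$, so $G$ and $\overline G$ play symmetric roles. Second, write $g_a(b)=c(a,b)$. Whenever $\{a,a'\}\in E(G)$ and $b\neq b'$ with $\{b,b'\}\notin E(G)$, the vertices $(a,b)$ and $(a',b')$ are adjacent in the product, so $g_a(b)\neq g_{a'}(b')$.

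\emph{Key lemma:} if $\overline G$ contains a clique $K$ with $|K|\geq\omega_1$, then $\chr(G)\leq\omega$. For each vertex $a$ and $n<\omega$, put $F_a(n)=\{b\in K: g_a(b)=n\}$. Let $\{a,a'\}\in E(G)$, and suppose $g_a(b)=n=g_{a'}(b')$ with $b,b'\in K$. Then $b=b'$, because any two distinct elements of $K$ are non-adjacent in $G$. So for each $n$, either one of $F_a(n)$, $F_{a'}(n)$ is empty, or both equal the same singleton. Since $K$ is uncountable, every $a$ has some $n$ with $|F_a(n)|\geq 2$. Coloring $a$ by the least such $n$ is then a proper countable coloring of $G$. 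By the symmetry, $G$ also contains no clique of size $\omega_1$.

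It therefore remains to show that when $\kappa\geq\omega_2$, any graph on $\kappa$ vertices has a clique or an independent set of size $\omega_1$. \textbf{This is the main obstacle.} The relation $\omega_2\to(\omega_1)^2$ is not a ZFC theorem. Erdős--Dushnik--Miller, $\kappa\to(\kappa,\omega)^2$, only gives a size-$\kappa$ clique in one graph or a countable infinite clique in the other. That is not enough, and it holds already for $\kappa=\omega_1$, so it cannot be what separates $\omega_1$ from $\omega_2$.

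My first attempt will be to strengthen the lemma so that it needs only a countable clique carrying extra structure. Concretely, I want cliques that are \emph{closed under the coloring}: take an elementary submodel $M\prec H(\theta)$ of size $\omega_1$ containing $G$ and $c$, with $\omega_1\subseteq M$. Then use a vertex $v\notin M$, which exists because $\kappa\geq\omega_2$, to reflect. Sort the vertices of $V\cap M$ according to whether they are $G$-adjacent to $v$ and according to the colors $c(x,v)$ and $c(v,x)$. Adjacency between $v$ and elements of $M$ should transfer, via the product constraints, to a countable coloring of $G\restr M$ or of $\overline G\restr M$. By elementarity this would then lift to a countable coloring of all of $G$ or all of $\overline G$.

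The point of having $v$ outside a model of size $\omega_1$ is precisely what fails when $\kappa=\omega_1$: there, $M$ can contain every vertex. I expect the verification that the colors $c(v,\cdot)$ and $c(\cdot,v)$ yield a proper coloring to be where the real work lies. The computation should be analogous to the fiber argument in the key lemma, with the single vertex $v$ replacing the uncountable clique $K$.
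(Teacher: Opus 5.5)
\textbf{There is a genuine gap: the step you flag as the main obstacle is never closed.}

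Your key lemma is correct. It is the second half of the paper's proof, with your ``$|F_a(n)|\geq 2$'' in place of the paper's uncountable fibres. The problem is what comes after it. The reduction to ``every graph on $\kappa\geq\omega_2$ vertices has a clique or independent set of size $\omega_1$'' is a dead end: for $\kappa=\omega_2$ that relation fails whenever $2^{\omega}\geq\omega_2$ (Sierpi\'nski).

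The elementary-submodel sketch does not replace it, for two reasons.
\begin{enumerate}
\item A countable coloring of $G\restr(V\cap M)$ is not an element of $M$, so elementarity cannot lift it to $G$.
\item A single outside vertex $v$ cannot play the role of the clique $K$. The vertices $(x,v)$ and $(x',v)$ share a coordinate, so they are never adjacent in $G\times\overline G$. Hence $c(\cdot,v)$ carries no information about $G$-edges among the $x$'s.
\end{enumerate}
The fiber argument works only because $K$ supplies many pairwise $\overline G$-adjacent second coordinates.

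The missing idea is the paper's Lemma~\ref{lem:bound}, and it needs no reflection or lifting. Your key lemma and its symmetric version already show that neither $G$ nor $\overline G$ has an uncountable independent set. Hence $\chr(G)=\chr(\overline G)=|V(G)|$, and it suffices to rule out both chromatic numbers being $\geq\omega_2$. Here is how. Fix any $X\in[V(G)]^{\omega_1}$. Suppose no map $x\mapsto c(x,y)$ with $y\in V(G)$ properly colors $G\restr X$. Then choose, for each $y$, an edge $e_y\in E(G)\cap[X]^2$ that is monochromatic in some color $n_y$. The map $y\mapsto(e_y,n_y)$ properly colors $\overline G$ with $\omega_1$ colors: if a $\overline G$-edge $\{y,y'\}$ had equal labels with $e_y=\{a,b\}$, then $(a,y)$ and $(b,y')$ would be adjacent and equally colored.

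So $\chr(\overline G)\geq\omega_2$ forces $G\restr X$ to be countably chromatic. Symmetrically, $\chr(G)\geq\omega_2$ forces $\overline G\restr X$ to be countably chromatic. Pairing the two colorings gives an injection $X\to\omega\times\omega$, which is a contradiction. The essential change is to use every vertex $y$ as a test vertex rather than a single $v$, and to obtain the contradiction inside one set of size $\omega_1$.
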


Thus $\omega_1$ is the only possible cardinality of such an example.
Despite these obstructions, this remaining possibility can be
realized.

\begin{theorem}\label{thm:diamond}
Assume $\diamondsuit$.  There is a graph $G$ on $\omega_1$ such that
\begin{displaymath}
 \chr(G)=\chr(\overline G)=\omega_1
 \quad\text{and}\quad
 \chr(G\times\overline G)=\omega.
\end{displaymath}
\end{theorem}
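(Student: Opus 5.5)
We build $G$ on $\omega_1$ by a $\diamondsuit$-guided recursion of length $\omega_1$. The vertex set is split into countable blocks $I_\alpha=[\omega\alpha,\omega\alpha+\omega)$. The countable coloring of the product is fixed in advance by a canonical rule, so that only the edges remain to be decided.

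First, fix for every $\alpha<\omega_1$ an injection $e_\alpha:\alpha\to\omega$. For $\beta<\alpha$, whether $\{\beta,\alpha\}\in G$ will be decided at stage $\alpha$. We define a candidate coloring $c$ of $V\times V$ as follows. For a pair $(x,y)$ with $x<y$, set $c(x,y)=(0,e_y(x),n_y)$, and define the case $y<x$ symmetrically with first coordinate $1$. On the diagonal, the vertices $(x,x)$ are isolated in $G\times\overline G$, since $\{x,z\}$ cannot lie in both $G$ and $\overline G$, so they get a single colour.

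Now consider an edge $\{(x,y),(x',y')\}$ of the product, so $xx'\in G$ and $yy'\notin G$. Suppose both pairs have $x<y$, $x'<y'$, and the same colour. If $y=y'$, then $e_y(x)=e_y(x')$ forces $x=x'$, which is impossible. So $y\neq y'$, say $y<y'$. The requirement for $c$ to be proper is then a constraint on how $y'$ is attached to the earlier vertices. Namely, whenever $n_y=n_{y'}$ and $e_y(x)=e_{y'}(x')$, it must not happen that $xx'\in G$ and $yy'\notin G$.

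Since $y,x,x'<y'$, the pair $\{x,x'\}$ is already decided at stage $y'$, except when $x'=y'$. But $x'<y'$, so that case does not occur. At stage $y'$ the rule therefore reads as a constraint on the new edges from $y'$: if the pair $\{x,x'\}$ is an edge with $e_y(x)=e_{y'}(x')$ and $n_y=n_{y'}$, then $y y'\in G$. The colours $n_{y'}\in\omega$ and the new edges are chosen at stage $y'$ together with the diagonalization.

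Second, we handle the requirements $\chr(G),\chr(\overline G)>\omega$. We use $\diamondsuit$ to guess, at stage $\delta$, a countable colouring $f_\delta:\delta\to\omega$ of the construction so far. For vertices $\alpha\in I_\delta$, we need some $\alpha$ adjacent in $G$ to infinitely many vertices of each colour class that is cofinal below $\delta$, and some $\alpha'$ that is non-adjacent to such a set. A standard reflection argument then shows that any purported $\omega$-colouring $f$ of $G$ or of $\overline G$ is guessed at some closed $\delta$ where $f\restr\delta=f_\delta$. At such a $\delta$ the new vertex $\alpha$ has a $G$-neighbour of every colour that appears unboundedly, so $f(\alpha)$ would collide, and symmetrically for $\overline G$. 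Handling both factors is done by interleaving, using even and odd members of $I_\delta$.

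The main obstacle is the compatibility of the forced-edge rule with these diagonalization demands. The rule forces $yy'\in G$ in certain cases, and symmetrically, with first coordinate $1$, it forces some non-edges. When choosing the neighbourhood of the new vertex $y'$ we have freedom in $n_{y'}$ and in $e_{y'}$ restricted to $I_\delta$. I would choose $n_{y'}$ to be a fresh value not used by any $y$ in the countably many requirements relevant to the diagonalization. Since $\omega$ is countable this is not literally possible, so instead I would choose $e_{y'}$ so that its values on the new block avoid those of $e_y$ on the relevant finite sets. This makes the forced constraints involve only a "small" part of $y'$'s neighbourhood. The diagonalization needs only an infinite, in fact cofinal, set of neighbours in each colour class, and it has an infinite free set to work with, so it can be met.

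Verifying that all constraints from both coordinate orders can be satisfied simultaneously, while still hitting every colour class both as neighbours and as non-neighbours, is the delicate combinatorial core. If the bookkeeping with $e_{y'}$ fails, the fallback is to build $G$ via a $\diamondsuit$-constructed Countryman-type line or tree, taking $G$ to be the comparability graph. In that setting the product colouring comes from the specialization of the square.
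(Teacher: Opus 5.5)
\textbf{There is a genuine gap.} The step you call the ``delicate combinatorial core'' is not a verification detail; it is the whole theorem. Your normal form for the colouring costs nothing. Any countable proper colouring $c$ of $G\times\overline G$ can be refined to it by letting $e_x(y)$ code $c(y,x)$, $c(x,y)$ and an injection $x\to\omega$. So all the content lies in building $G$, $e_\alpha$ and $n_\alpha$ together, and that is not done.

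Your description of the constraints is also incomplete. Fix $\beta<\alpha$ with $n_\beta=n_\alpha$, and call $\{z,z'\}$ \emph{matched} if $z\ne z'$ and $e_\beta(z)=e_\alpha(z')$.
\begin{itemize}
\item The $0$-palette condition, with $\beta,\alpha$ as second coordinates, says a matched edge forces $\beta\alpha\in G$.
\item The $1$-palette condition, with $\beta,\alpha$ as first coordinates, uses the \emph{same} matched pairs and says a matched non-edge forces $\beta\alpha\notin G$.
\end{itemize}
So $G(\beta,\alpha)$ must equal $G(z,z')$ for every matched pair. This is consistent only if all matched pairs are $G$-homogeneous, and then $\beta\alpha$ is not free.

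Matched pairs cannot be dodged. Suppose $Y_n=\{y:n_y=n\}$ is uncountable and no two members of $Y_n$ have a matched pair. Then $e_y^{-1}(k)$ is independent of $y\in Y_n$. Hence every $y\in Y_n$ lies inside one countable set, which is impossible.

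Your remedy, adjusting $e_{y'}$ ``on the new block'' against ``finite sets'', does not touch this. The map $e_\alpha$ is defined on all of $\alpha$, and matched pairs involve arbitrary $z'<\alpha$. Nothing reconciles these forced two-sided values with the $\diamondsuit$-diagonalization, which must set $G(y,\alpha)$ freely, both ways, for $y$ in the guessed colour classes.

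Two smaller points. The diagonal vertices $(x,x)$ are independent, not isolated: $(x,x)$ is adjacent to $(x',y')$ whenever $xx'\in G$ and $xy'\notin G$. One separate colour is still enough. You also never show $\chr(G\times\overline G)\ge\omega$; the paper gets this by adjoining a countable gadget.

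\textbf{The fallback is only a pointer.} A specializing map for $T\otimes T$ colours only pairs of nodes at the same level, not all of $T\times T$, and you give no chromatic-number argument.

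\textbf{The missing idea is the paper's.} Put two Countryman orders $<_0,<_1$ on one set $X$, and let $G$ be the graph of pairs on which they agree.
\begin{itemize}
\item \emph{Product colouring.} Colour $(x,y)$ by the indices of chains containing it in the two Countryman decompositions of $X^2$. If $(x,y)$ and $(x',y')$ get the same colour, then $x<_j x'\Longleftrightarrow y<_j y'$ for $j<2$. So the orders agree on $\{x,x'\}$ exactly when they agree on $\{y,y'\}$, and the two pairs cannot be an edge of $G$ and an edge of $\overline G$ respectively. No recursion is needed.
\item \emph{Chromatic numbers.} Independent sets of $G$ (resp.\ $\overline G$) are sets on which $<_0$ coincides with $<_1^*$ (resp.\ $<_1$). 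By minimality, $\chr(G),\chr(\overline G)>\omega$ reduce to $L_0\not\hookrightarrow L_1^*$ and $L_0\not\hookrightarrow L_1$.
\item \emph{Where $\diamondsuit$ enters.} It is used only through Shalev's family of pairwise non-embeddable minimal Countryman lines.
\end{itemize}
For a tree of sequences, the comparability graph is the agreement graph of the two lexicographic orders built from opposite successor orderings. So your fallback is a special case of this idea. But it would still need both lexicographic orders to be Countryman and the tree to be non-special (antichains are the $G$-independent sets), and you address neither.
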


The proof uses two suitably chosen minimal Countryman lines.

\begin{theorem}\label{thm:main}
There is a c.c.c.\ partial order $P$ of cardinality $\omega_1$ such
that every $P$-generic extension contains a graph $G$ on $\omega_1$
for which
\begin{displaymath}
 \chr(G)=\chr(\overline G)=\omega_1
 \quad\text{and}\quad
 \chr(G\times\overline G)=\omega.
\end{displaymath}
\end{theorem}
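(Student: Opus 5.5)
We will follow the pattern of Theorem~\ref{thm:diamond}, replacing the $\diamondsuit$-guided construction by a generic one. The graph $G$ will be added by finite conditions, and the countable coloring of $G\times\overline G$ will be added at the same time. A condition $p\in P$ is a triple $p=(a_p,E_p,c_p)$. Here $a_p\in[\omega_1]^{<\omega}$, $E_p\subseteq[a_p]^2$ decides which pairs from $a_p$ are edges of $G$, and $c_p\colon a_p\times a_p\to\omega$ is a partial product coloring. The requirement on $c_p$ is that for distinct pairs $(\alpha,\beta),(\gamma,\delta)\in a_p^2$ with $\{\alpha,\gamma\}\in E_p$ and $\{\beta,\delta\}\notin E_p$, $\beta\neq\delta$, we have $c_p(\alpha,\beta)\neq c_p(\gamma,\delta)$. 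Extension is coordinatewise end-extension. Plainly $|P|=\omega_1$, and the generic objects give a graph $G$ on $\omega_1$ together with a proper coloring $c$ of $G\times\overline G$. Genericity makes $c$ total, since a new pair may always receive a fresh color. So $\chr(G\times\overline G)\le\omega$.

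Unrestricted finite conditions of this kind would make $G$ behave like a Cohen-generic graph. By the Hajnal--Komjáth type arguments such a graph may still be uncountably chromatic, but c.c.c.\ plus the colorability of the product is delicate. We therefore plan to build in the side structure used in the proof of Theorem~\ref{thm:diamond}, namely two minimal Countryman lines, or their finite approximations. Concretely, we fix in the ground model (ZFC) an Aronszajn-tree-like coherent sequence, e.g.\ Todorcevic's $\rho$-function. We require that the edge relation $E_p$ be determined, off a finite "free" part, by the Countryman orderings: $\{\alpha,\beta\}\in G$ iff the two Countryman orders agree on $\alpha,\beta$. This mirrors the diamond construction, where $G$ is the comparability graph of two Countryman orders. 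The product coloring should then come from the Countryman property, namely that the product of a Countryman line with itself is a countable union of chains. The forcing is needed only to generically choose the second line so that it is "minimal relative to" the first, which is what $\diamondsuit$ supplied.

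The steps, in order, are as follows. (1) Define $P$ precisely and verify the density arguments, so that $G$ is total and $c$ is a total proper coloring. (2) Prove that $P$ is c.c.c.: given $\omega_1$ conditions, apply the $\Delta$-system lemma to the supports $a_p$ and thin out so that the $c_p$ agree on the root and the side structure is isomorphic. Then show that two such conditions $p,q$ are compatible by amalgamating them. The new cross edges between $a_p\setminus a_q$ and $a_q\setminus a_p$ are dictated by the Countryman orders, and the colors of new product pairs are fresh. The danger is color clashes among old pairs $(\alpha,\beta)\in a_p^2$, $(\gamma,\delta)\in a_q^2$ with equal colors whose cross-edge status is now forced. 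We handle this with the Countryman/coherence property: by thinning to an uncountable set we may choose $p,q$ whose tails are ordered "in the same way" in both lines, so the forced cross pairs are non-edges or edges uniformly. This is exactly the property that makes $L^2$ a countable union of chains. (3) Show $\chr(G)=\omega_1$ and, symmetrically, $\chr(\overline G)=\omega_1$. Suppose $\dot f$ is a name for a countable coloring. We pick for uncountably many $\alpha$ conditions $p_\alpha$ deciding $\dot f(\alpha)=n$, with a fixed $n$, and refine by $\Delta$-system. We then find $\alpha\ne\beta$ whose conditions amalgamate with $\{\alpha,\beta\}$ forced into $G$ (resp.\ $\overline G$). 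Minimality of the Countryman lines, i.e.\ every uncountable set contains pairs ordered both ways, guarantees both kinds of pairs occur.

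The main obstacle is step (2) interacting with step (3). The amalgamation for c.c.c.\ needs cross pairs to be decided homogeneously, whereas step (3) needs an uncountable family to contain both an edge and a non-edge. We expect to reconcile these as in the $\diamondsuit$ proof: homogeneity is needed only "coordinatewise in the product," which the Countryman property provides, while heterogeneity in $G$ itself is provided by the failure of the lines to be $\sigma$-chains. Checking that these two demands do not collide for the finite free part is where most of the work lies.
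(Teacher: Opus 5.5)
There is a genuine gap. The forcing is never actually defined, and the part you do define fails. The bare poset from your first paragraph, finite triples $(a_p,E_p,c_p)$ in which $c_p$ only has to properly color the finite product, is not c.c.c. Fix $u<v<u'<v'$. For $\alpha>v'$ let $p_\alpha$ have support $\{u,v,u',v',\alpha\}$, with $\{u,v\}\notin E_{p_\alpha}$ and $\{u',v'\}\in E_{p_\alpha}$. Set $c_{p_\alpha}(\alpha,u)=c_{p_\alpha}(\alpha,v)=5$ and $c_{p_\alpha}(u',\alpha)=c_{p_\alpha}(v',\alpha)=6$, and give all other pairs distinct colors different from $5,6$. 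Each $p_\alpha$ is a condition, because the repeated colors sit on product vertices sharing a coordinate. Now take $\alpha\ne\beta$. If a common extension puts $\{\alpha,\beta\}$ into $G$, then $(\alpha,u)$ and $(\beta,v)$ are adjacent in $G\times\overline G$ and both have color $5$. If it puts $\{\alpha,\beta\}$ into $\overline G$, then $(u',\alpha)$ and $(v',\beta)$ are adjacent and both have color $6$. So the $p_\alpha$ form an uncountable antichain.

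The obstruction sits on the diagonal cross pair $\{\alpha,\beta\}$ of two isomorphic conditions. That is exactly the pair whose status your step (3) must be free to prescribe, so choosing the other cross pairs ``homogeneously'' does not touch it. Your remedy is to let edges be dictated, off an unspecified ``finite free part'', by two Countryman orders, with the second chosen generically to be ``minimal relative to'' the first. This is never made into a definition, and it points the wrong way. If the orders dictate $\{\alpha,\beta\}$, amalgamation cannot prescribe it. Then $\chr(G)=\chr(\overline G)=\omega_1$ would have to come from a combinatorial property of the two orders, as under $\diamondsuit$, and ZFC need not supply this: Moore's model has no minimal Countryman lines. Adding such a line generically, together with a chain decomposition of its square, and proving c.c.c. would be the entire problem, not a side step.

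The missing idea is an extra invariant on conditions that excludes the configuration above and all its propagations. This is the paper's (G2). Suppose $(a,b)$ is reached from a base point $x$ by two walks whose vertices are all $\ge x$ and whose corresponding steps have equal colors in the relevant coordinate of $f$. If $\{u,v\}$ is an edge of the opposite graph with $\min\{u,v\}<x$, then (G2) requires the colors of $(a,u)$ and $(b,v)$ to differ. Fresh extensions preserve (G2).

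With (G2), separated twins $p,q$ with $x=\min D_p$ can be amalgamated with the status of $\{x,\sigma(x)\}$ prescribed either way. Here $\sigma$ is the twin isomorphism and $\pi$ projects $A^q$ back onto $A^p$. The amalgam gives the prescribed value to every cross pair $\{a,b\}$ for which $(a,\pi(b))$ is synchronized from base $x$. It copies $h^p(\{a,\pi(b)\})$ on the remaining cross pairs, and it uses fresh colors on new product vertices. (G2) is precisely what stops the synchronized closure of $(x,x)$ from colliding with an already decided pair that has an endpoint in the root below $x$, and (G2) is inherited by the amalgam.

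This one lemma gives the c.c.c. A pressing-down argument then arranges $\alpha=\min(A^{p_\alpha}\setminus R)$, and the same lemma gives $\chr(H_0)=\chr(H_1)=\omega_1$. So the tension you anticipate between homogeneous and heterogeneous cross pairs never arises, and no Countryman lines are needed.
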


\begin{problem}\label{q:zfc}
Does ZFC prove that there is a graph $G$ on $\omega_1$
satisfying~\eqref{eq:central}?
\end{problem}

\section{The negative results}

The proof of Theorem~\ref{thm:size-bound} rests on the following lemma,
based on an observation of Hajnal \cite{Hajnal1985}.

\begin{lemma}\label{lem:bound}
There is no graph $G$ such that
\begin{displaymath}
 \chr(G),\chr(\overline G)\geq\omega_2
 \quad\text{and}\quad
 \chr(G\times\overline G)\leq\omega.
\end{displaymath}
\end{lemma}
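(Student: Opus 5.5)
The plan is to fix a countable coloring $c\colon V\times V\to\omega$ of $G\times\overline G$ and show directly that $\chr(G)\leq\omega_1$. The only thing needed from $\overline G$ is a connected set $Z\subseteq V$ of size $\omega_1$ in $\overline G$.

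First I build $Z$. Since $\chr(\overline G)\geq\omega_2$, some connected component $C$ of $\overline G$ has $\chr(\overline G[C])\geq\omega_2$; otherwise colouring each component separately would give $\chr(\overline G)\leq\omega_1$. In particular $|C|\geq\omega_2$. Grow a connected set inside $C$ by a closure argument: start from one edge and repeatedly add a neighbour, taking unions at limit stages. After $\omega_1$ steps this gives a connected $Z\subseteq C$ with $|Z|=\omega_1$. Let $E_Z$ be the set of edges of $\overline G[Z]$, so $|E_Z|\leq\omega_1$ and $E_Z\ne\emptyset$.

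The key observation concerns a single edge $\{z,z'\}$ of $\overline G$ and the set
\[
A_{z,z'}=\{y\in V: c(y,z)\ne c(y,z')\}.
\]
Colour $y\in A_{z,z'}$ by the pair $(c(y,z),c(y,z'))\in\omega\times\omega$. Suppose $y,y'\in A_{z,z'}$ are adjacent in $G$ and get the same pair $(a,b)$. Then $(y,z)$ and $(y',z')$ are adjacent in $G\times\overline G$, so $c(y,z)\ne c(y',z')$, i.e.\ $a\ne b$. That alone is not a contradiction, so use both orientations. Being adjacent in $G\times\overline G$ also forces $(y,z')$ and $(y',z)$ to get different colours, so $b\neq a$ as well. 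This still only says $a\ne b$, which holds on $A_{z,z'}$.

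So the contradiction needs a different colouring: colour $y$ by the single value $c(y,z)$. If $y,y'\in A_{z,z'}$ are $G$-adjacent with $c(y,z)=c(y',z)$, nothing is violated directly either. I therefore expect this step to be the main obstacle, and I would repair it as follows. Colour $y\in A_{z,z'}$ by the pair $(a,b)=(c(y,z),c(y,z'))$. If $y\sim_G y'$ with the same pair, then $c(y,z)\ne c(y',z')$ means $a\ne b$, which is consistent. Hence same-colour classes need not be independent, and the edge-by-edge covering does not obviously work.

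The cleaner route is the complementary set, which I treat as the main line. Let
\[
B=\{y\in V: c(y,\cdot)\text{ is constant on every edge of }E_Z\}.
\]
Since $Z$ is connected in $\overline G$, each $y\in B$ has $c(y,\cdot)$ constant on $Z$, say with value $n_y$. Write $B_n=\{y\in B:n_y=n\}$. Each $B_n$ is $G$-independent: if $y,y'\in B_n$ were $G$-adjacent, pick any $\{z,z'\}\in E_Z$. Then $(y,z)$ and $(y',z')$ are adjacent in $G\times\overline G$, yet both get colour $n$. Thus $\chr(G[B])\leq\omega$.

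It remains to cover $V\setminus B=\bigcup_{\{z,z'\}\in E_Z}A_{z,z'}$, a union of at most $\omega_1$ sets, by $\omega_1$ countably chromatic pieces. The step I would work hardest on is showing $\chr(G[A_{z,z'}])\leq\omega$, or replacing $A_{z,z'}$ by a countably chromatic refinement. The first attempt is to use more of the product structure on each $A_{z,z'}$: pair $y$ with both endpoints and with neighbours of $z,z'$ inside $Z$. The aim is to find, for $G$-adjacent $y,y'$ in one class, a $\overline G$-edge $\{w,w'\}\subseteq Z$ with $c(y,w)=c(y',w')$. Failing that, I would split $V\setminus B$ according to the whole function $c(y,\cdot)\restr Z$ modulo a countable amount of information. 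The fallback is Hajnal's original device of fixing countable elementary submodels to reduce to $\omega_1$ many types.

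Once every piece is countably chromatic, $\chr(G)\leq\omega_1\cdot\omega=\omega_1$, contradicting $\chr(G)\geq\omega_2$.
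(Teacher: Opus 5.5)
Your proposal has a genuine gap at exactly the step you flag. You never show that $V\setminus B=\bigcup_{\{z,z'\}\in E_Z}A_{z,z'}$ is a union of $\omega_1$ countably chromatic pieces, and with these sets it cannot be done, because the condition $c(y,z)\neq c(y,z')$ gives no leverage in the product. For instance, whenever $\overline G$ has a countable proper coloring $g$, the proper coloring $c(y,w)=g(w)$ of $G\times\overline G$ has $A_{z,z'}=V$ for every edge, whatever $\chr(G)$ is.

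The useful sets are the complementary ones, $A'_{z,z'}=\{y:c(y,z)=c(y,z')\}$. On $A'_{z,z'}$ the map $y\mapsto c(y,z)$ is a proper coloring of $G$, by precisely your argument for $B_n$. That argument only uses a single edge, so the connectedness of $Z$ does no work. Hence $V$ is covered by $|E_Z|\cdot\omega\leq\omega_1$ $G$-independent sets together with the set of those $y$ for which $c(y,\cdot)\restr Z$ is a proper coloring of $\overline G[Z]$. The latter set is empty as soon as $\chr(\overline G[Z])>\omega$. So what you need is not a connected $Z$ but some $Z\in[V]^{\omega_1}$ on which $\overline G$ is uncountably chromatic. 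Nothing in your proposal produces one, and the appeal to elementary submodels does not supply it.

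That is the missing idea, and it is the other half of the paper's argument. Apply the same edge-and-color trick with the coordinates swapped. Suppose $G\restr X$ were uncountably chromatic for some $X\in[V]^{\omega_1}$. Then for every $y$ the map $x\mapsto c(x,y)$ fails to be proper on $G\restr X$. Choose a $G$-edge $e_y\subseteq X$ on which it is constant, with value $n_y$; then $y\mapsto(e_y,n_y)$ is a proper coloring of $\overline G$ with $\omega_1$ colors. Thus $\chr(\overline G)\geq\omega_2$ forces $G\restr X$ to be countably chromatic for every $X\in[V]^{\omega_1}$.

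Now $G\restr X$ and $\overline G\restr X$ cannot both be countably chromatic, since the pair of colors would inject $X$ into $\omega\times\omega$. So $\overline G\restr X$ is uncountably chromatic, and any $X\in[V]^{\omega_1}$ can serve as your $Z$. The corrected decomposition then yields $\chr(G)\leq\omega_1$, the desired contradiction. The paper runs this symmetrically: it shows both restrictions to $X$ are countably chromatic and gets the contradiction directly from the injectivity.
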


Since no proof of Hajnal's observation is given there, we include the
full argument for completeness.

\begin{proof}[Proof of Lemma~\ref{lem:bound}]
Suppose otherwise.  Let
$c:V(G)\times V(G)\to\omega$ be a proper coloring of
$G\times\overline G$, and choose $X\in[V(G)]^{\omega_1}$.  We first
show that $G\restr X$ is countably chromatic.  Otherwise, for every
$y\in V(G)$ the map
\begin{displaymath}
 x\longmapsto c(x,y),\qquad x\in X,
\end{displaymath}
fails to be a proper coloring of $G\restr X$.  Choose an edge
$e_y=\{a_y,b_y\}\in E(G)\cap[X]^2$ and an $n_y<\omega$ such that
\begin{displaymath}
 c(a_y,y)=c(b_y,y)=n_y.
\end{displaymath}
Then
\begin{displaymath}
 y\longmapsto(e_y,n_y)
\end{displaymath}
is a proper coloring of $\overline G$: if $\{y,y'\}\in E(\overline G)$
and $(e_y,n_y)=(e_{y'},n_{y'})$, writing
$e_y=e_{y'}=\{a,b\}$ and $n_y=n_{y'}=n$ would make $(a,y)$ and
$(b,y')$ adjacent vertices of $G\times\overline G$ with the same
$c$-color.  Since
\begin{displaymath}
 |(E(G)\cap[X]^2)\times\omega|\leq\omega_1,
\end{displaymath}
this contradicts $\chr(\overline G)\geq\omega_2$.  Hence
$G\restr X$ is countably chromatic.  Interchanging the two coordinates
and replacing $G$ by $\overline G$ shows that
$\overline G\restr X$ is countably chromatic as well.  Let
$c_0,c_1:X\to\omega$ be proper colorings of these two graphs.  The map
\begin{displaymath}
 x\longmapsto\bigl(c_0(x),c_1(x)\bigr)
\end{displaymath}
is injective, because every pair of distinct elements of $X$ is an
edge of exactly one of the two complementary graphs.  This contradicts
$|X|=\omega_1$.
\end{proof}

\begin{proof}[Proof of Theorem~\ref{thm:size-bound}]
By Lemma~\ref{lem:bound}, after interchanging $G$ and $\overline G$
if necessary, we have $\chr(G)=\omega_1$.  Suppose that
$|V(G)|>\omega_1$, and let $d:V(G)\to\omega_1$ be a proper coloring.
Some color class $C$ of $d$ is uncountable, since otherwise $V(G)$
would be the union of $\omega_1$ countable sets.  Thus $C$ is a clique
in $\overline G$.

Let $c:V(G)\times V(G)\to\omega$ be a proper coloring of
$G\times\overline G$.  For every $x\in V(G)$, choose $n_x<\omega$ such
that
\begin{displaymath}
 A_x=\{y\in C:c(x,y)=n_x\}
\end{displaymath}
is uncountable.  The map $x\mapsto n_x$ is a proper coloring of
$G$: if $\{x,x'\}\in E(G)$ and $n_x=n_{x'}$, choose distinct
$y\in A_x$ and $y'\in A_{x'}$.  Then $(x,y)$ and $(x',y')$ are
adjacent in $G\times\overline G$ and have the same $c$-color, a
contradiction.  Hence $G$ is countably chromatic, again a
contradiction.  Therefore $|V(G)|=\omega_1$.
\end{proof}

\section{A construction from $\diamondsuit$}

Recall that a linear order is \emph{scattered} if it contains no copy
of the rationals, and it is \emph{$\sigma$-scattered} if it is the
union of countably many scattered suborders.  A non-$\sigma$-scattered
linear order $L$ is \emph{minimal} if $L$ embeds into each of its
non-$\sigma$-scattered suborders.  A \emph{Countryman line} is a linear
order $L=(X,<_{L})$ of cardinality $\omega_1$ such that the
coordinatewise order on $X^2$ is the union of countably many chains.

We use the following consequence of a theorem of Shalev
\cite[Corollary~1.2 and Remark~1.3]{Shalev2024}: $\diamondsuit$ implies
that there is a family of $2^{\omega_1}$ pairwise non-embeddable
Countryman lines, each minimal with respect to being
non-$\sigma$-scattered.

\begin{proof}[Proof of Theorem~\ref{thm:diamond}]
Choose two members $L_0,L_1$ of Shalev's family such that
\begin{equation}\label{eq:nonembedding}
 L_0\not\hookrightarrow L_1
 \quad\text{and}\quad
 L_0\not\hookrightarrow L_1^*.
\end{equation}
Here $L^*$ denotes the reverse of $L$.  To see that this choice is
possible, fix $L_0$.  There is at most one member $M$ of the
family for which $L_0$ embeds into $M^*$.  Indeed, if both $M$ and
$N$ had this property, then $L_0^*$ would embed into both $M$ and
$N$.  Its images are non-$\sigma$-scattered, so the minimality of
$M$ and $N$ would give embeddings
\begin{displaymath}
 M\longrightarrow L_0^*\longrightarrow N,
\end{displaymath}
contrary to the pairwise non-embeddability of the family.  We may
therefore choose $L_1$ outside this possible exception; the first part
of~\eqref{eq:nonembedding} follows directly from pairwise
non-embeddability.

Relabel the two orders so that they have the same underlying set $X$
of cardinality $\omega_1$, and write them as $(X,<_{0})$ and
$(X,<_{1})$.  Define a graph $G$ on $X$ by declaring distinct $x,y$
adjacent precisely when the two orders agree on their orientation:
\begin{equation}\label{eq:edge-definition}
 \{x,y\}\in E(G)
 \quad\Longleftrightarrow\quad
 (x<_{0}y\Longleftrightarrow x<_{1}y).
\end{equation}

We first check the chromatic numbers of the two complementary graphs.
If $G$ had a countable proper coloring, write $X=\bigcup_{n<\omega}A_n$
with every $A_n$ independent.  Some $L_0\restr A_n$ is
non-$\sigma$-scattered, since a countable union of
$\sigma$-scattered orders is $\sigma$-scattered.  Minimality gives
an embedding $L_0\longrightarrow L_0\restr A_n$.  On $A_n$ the two
orders disagree on every pair, so the identity embeds
$L_0\restr A_n$ into $L_1^*$.  Their composition contradicts
\eqref{eq:nonembedding}.
Thus $\chr(G)=\omega_1$.  If $\overline G$ had a countable proper
coloring, the same argument would use a color class on which the two
orders agree and give an embedding $L_0\longrightarrow L_1$.
Consequently,
\begin{equation}\label{eq:factor-chromatic-numbers}
 \chr(G)=\chr(\overline G)=\omega_1.
\end{equation}

For $j<2$, choose a cover
\begin{displaymath}
 X^2=\bigcup_{n<\omega}C_n^j
\end{displaymath}
in which every $C_n^j$ is a chain in the coordinatewise order induced
by $<_{j}$.  Choose $k_j(x,y)<\omega$ with
$(x,y)\in C_{k_j(x,y)}^j$, and put
\begin{displaymath}
 f(x,y)=\bigl(k_0(x,y),k_1(x,y)\bigr)\in\omega^2.
\end{displaymath}
We claim that $f$ is a proper countable coloring of
$G\times\overline G$.  Suppose that
$f(x,y)=f(x',y')$, where $x\ne x'$ and $y\ne y'$.  For each
$j<2$, the points $(x,y)$ and $(x',y')$ lie in one coordinatewise
chain.  Hence
\begin{displaymath}
 x<_{j}x'\quad\Longleftrightarrow\quad y<_{j}y'.
\end{displaymath}
It follows that the two orders agree on $\{x,x'\}$ exactly when they agree on
$\{y,y'\}$.  By~\eqref{eq:edge-definition}, one of these pairs cannot
be an edge of $G$ and the other an edge of $\overline G$.  This proves
the claim and gives
$\chr(G\times\overline G)\leq\omega$.

For completeness, we show that the last inequality is an equality.
Let $W=W_0\mathbin{\dot\cup}W_1$ be countably infinite, disjoint from
$X$, with both pieces infinite.  Extend $G$ to a graph $\widehat G$ on
$X\cup W$ by adding all pairs from $[W_0]^2$ and no other pairs
involving $W$.  The restrictions to $X$ show that both $\widehat G$
and its complement still have chromatic number $\omega_1$.

Fix an injection $e:W\to\omega$.  Using three disjoint countable
palettes (whose union we identify with $\omega$), extend the product
coloring by
\begin{equation}\label{eq:extended-product-coloring}
 F(u,v)=
 \begin{cases}
  (0,f(u,v)),&u,v\in X,\\
  (1,e(u)),&u\in W,\\
  (2,e(v)),&u\in X, v\in W.
 \end{cases}
\end{equation}
Two vertices receiving the same color of the second kind have the
same first coordinate, and two receiving the same color of the third
kind have the same second coordinate; neither pair can be adjacent in
the categorical product.  Thus~\eqref{eq:extended-product-coloring}
properly colors
$\widehat G\times\overline{\widehat G}$ with countably many colors.
On the other hand, pairing the members of the clique $W_0$ bijectively
with the members of the $\widehat G$-independent set $W_1$ produces a
copy of $K_\omega$ in the product.  Therefore
\begin{displaymath}
 \chr(\widehat G\times\overline{\widehat G})=\omega.
\end{displaymath}
Finally, relabel $X\cup W$ as $\omega_1$.
\end{proof}

The preceding argument cannot be carried out in ZFC as stated: it
requires suitably chosen minimal Countryman lines, and ZFC does not
even prove that a minimal Countryman line exists.  Indeed, Moore proved
that it is consistent with CH that $\omega_1$ and $\omega_1^*$ are the
only minimal uncountable linear orders \cite{Moore2007}; see also
\cite[Theorem~1.10]{CummingsEisworthMoore2024}.  Since every
uncountable suborder of a Countryman line is again a Countryman line,
that model has no minimal Countryman lines.  The forcing construction
below shows that the graph-theoretic phenomenon is nevertheless not
intrinsically tied to $\diamondsuit$.  We define a c.c.c.\ forcing of
cardinality $\omega_1$ that adds the required graph over any ground
model.  In particular, forcing over a model of $\neg$CH shows that the
existence of such a graph is consistent with $\neg$CH.  The argument
also isolates the finite extension and amalgamation properties
underlying the construction.

\section{The forcing construction}

\begin{definition}\label{def:preconditions}
A \emph{precondition} is a triple
\begin{displaymath}
 p=(A^p,h^p,f^p),
\end{displaymath}
where $A^p\in[\omega_1]^{<\omega}$,
$h^p:[A^p]^2\to 2$, and $f^p:A^p\times A^p\to\omega$.
We order preconditions by coordinatewise extension:
\begin{displaymath}
 p\leq q
 \quad\Longleftrightarrow\quad
 A^p\supseteq A^q,\quad h^p\supseteq h^q,\quad
 f^p\supseteq f^q.
\end{displaymath}
Put
\begin{displaymath}
 E_i^p=(h^p)^{-1}\{i\},\qquad H_i^p=(A^p,E_i^p)
 \quad(i<2).
\end{displaymath}
Thus $H_0^p$ and $H_1^p$ are complementary graphs on $A^p$.
For $e<2$ define
\begin{displaymath}
 f_e^p(a,b)=
 \begin{cases}
 f^p(a,b),&e=0,\\
 f^p(b,a),&e=1.
 \end{cases}
\end{displaymath}
\smallskip
\noindent\emph{Synchronized pairs.}
Fix $x\in A^p$ and $e<2$.  Let
$\mathsf{Sync}^{x,e}_p\subseteq A^p\times A^p$ consist of all pairs
$(a,b)$ for which there are $k<\omega$ and two sequences
\begin{displaymath}
 a_0,\ldots,a_k\quad\text{and}\quad b_0,\ldots,b_k
\end{displaymath}
such that
\begin{enumerate}
\item $a_0=b_0=x$, $a_k=a$, and $b_k=b$;
\item $a_j,b_j\geq x$ for every $j\leq k$;
\item
$f_e^p(a_j,a_{j+1})=f_e^p(b_j,b_{j+1})$ for every $j<k$.
\end{enumerate}
We call such a pair of sequences \emph{synchronized walks} witnessing
$(a,b)\in\mathsf{Sync}^{x,e}_p$.
The case $k=0$ is allowed.  Notice also that, whenever $x\leq a$,
the identical one-step walks show
$(a,a)\in\mathsf{Sync}^{x,e}_p$.

\begin{figure}[ht]
\centering
\begin{tikzpicture}[
  vertex/.style={inner sep=1pt},
  every node/.style={font=\small}
]
\node[vertex] (x) at (0,0) {$x=a_0=b_0$};
\node (aldots) at (1.4,0.9) {$\cdots$};
\node[vertex] (aj) at (2.8,0.9) {$a_j$};
\node[vertex] (ajp) at (4.6,0.9) {$a_{j+1}$};
\node (ardots) at (5.8,0.9) {$\cdots$};
\node[vertex] (ak) at (7.2,0.9) {$a_k=a$};

\node (bldots) at (1.4,-0.9) {$\cdots$};
\node[vertex] (bj) at (2.8,-0.9) {$b_j$};
\node[vertex] (bjp) at (4.6,-0.9) {$b_{j+1}$};
\node (brdots) at (5.8,-0.9) {$\cdots$};
\node[vertex] (bk) at (7.2,-0.9) {$b_k=b$};

\draw[->] (x) -- (aldots);
\draw[->] (aldots) -- (aj);
\draw[->] (aj) -- node[above] {$c_j$} (ajp);
\draw[->] (ajp) -- (ardots);
\draw[->] (ardots) -- (ak);
\draw[->] (x) -- (bldots);
\draw[->] (bldots) -- (bj);
\draw[->] (bj) -- node[below] {$c_j$} (bjp);
\draw[->] (bjp) -- (brdots);
\draw[->] (brdots) -- (bk);

\draw[densely dotted] (aj) -- (bj);
\draw[densely dotted] (ajp) -- (bjp);
\draw[densely dotted] (ak) -- node[midway,right]
  {$(a,b)\in\mathsf{Sync}^{x,e}_p$} (bk);

\node[align=center] at (3.6,-1.9) {%
  $c_j=f_e^p(a_j,a_{j+1})=f_e^p(b_j,b_{j+1})\quad(j<k),$\\[2pt]
  $a_i,b_i\geq x\quad(i\leq k).$};
\end{tikzpicture}
\caption{Synchronized walks witnessing
$(a,b)\in\mathsf{Sync}^{x,e}_p$.  Corresponding steps have the same
$f_e^p$-color, and all vertices of the walks are at least $x$.}
\label{fig:sync}
\end{figure}
\end{definition}

\begin{remark}\label{rem:recursive-sync}
Equivalently, $\mathsf{Sync}^{x,e}_p$ is the least relation
$R\subseteq A^p\times A^p$ satisfying the following two rules:
\begin{enumerate}
\item[(R0)] $(x,x)\in R$;
\item[(R1)] if $(a,b)\in R$ and $a',b'\in A^p$ satisfy
\begin{displaymath}
 a',b'\geq x
 \quad\text{and}\quad
 f_e^p(a,a')=f_e^p(b,b'),
\end{displaymath}
then $(a',b')\in R$.
\end{enumerate}
Indeed, iterating (R1) produces exactly the synchronized walks in the
definition above.
\end{remark}

\begin{definition}[Good preconditions]\label{def:good}
A precondition $p$ is \emph{good} if the following two requirements
hold.
\begin{enumerate}
\item[(G1)] $f^p$ is a proper coloring of $H_0^p\times H_1^p$.
Equivalently, whenever $a_0\ne b_0$, $a_1\ne b_1$,
\begin{displaymath}
 h^p(\{a_0,b_0\})=0
 \quad\text{and}\quad
 h^p(\{a_1,b_1\})=1,
\end{displaymath}
then
\begin{displaymath}
 f^p(a_0,a_1)\ne f^p(b_0,b_1).
\end{displaymath}
\item[(G2)] If $e<2$, $x,a,b,u,v\in A^p$, $u\ne v$,
\begin{displaymath}
 (a,b)\in\mathsf{Sync}^{x,e}_p,\qquad
 \min\{u,v\}<x,
 \qquad h^p(\{u,v\})=1-e,
\end{displaymath}
then
\begin{displaymath}
 f_e^p(a,u)\ne f_e^p(b,v).
\end{displaymath}
\end{enumerate}

\begin{figure}[ht]
\centering
\begin{tikzpicture}[
  vertex/.style={inner sep=1pt},
  every node/.style={font=\small}
]
\node[vertex] (x) at (0,0) {$x$};
\node[vertex] (a1) at (1.6,0.9) {$a_1$};
\node (adots) at (2.8,0.9) {$\cdots$};
\node[vertex] (a) at (4.2,0.9) {$a$};
\node[vertex] (u) at (7.0,0.9) {$u$};
\node[vertex] (b1) at (1.6,-0.9) {$b_1$};
\node (bdots) at (2.8,-0.9) {$\cdots$};
\node[vertex] (b) at (4.2,-0.9) {$b$};
\node[vertex] (v) at (7.0,-0.9) {$v$};

\draw[->] (x) -- node[above,sloped] {$c_0$} (a1);
\draw[->] (a1) -- (adots);
\draw[->] (adots) -- node[above] {$c_{k-1}$} (a);
\draw[->] (x) -- node[below,sloped] {$c_0$} (b1);
\draw[->] (b1) -- (bdots);
\draw[->] (bdots) -- node[below] {$c_{k-1}$} (b);
\draw[->] (a) -- node[above]
  {$d_0:=f_e^p(a,u)$} (u);
\draw[->] (b) -- node[below]
  {$d_1:=f_e^p(b,v)$} (v);

\draw[densely dotted] (a1) -- (b1);
\node[align=center] at (4.2,0)
  {$(a,b)\in\mathsf{Sync}^{x,e}_p$};
\draw[thick] (u) -- node[midway,right,align=left]
  {$u\ne v,$\\$h^p(\{u,v\})=1-e$} (v);

\node[align=center] at (4.2,-1.9)
  {$\min\{u,v\}<x\quad\Longrightarrow\quad d_0\ne d_1\qquad\text{(G2)}$};
\end{tikzpicture}
\caption{Requirement (G2).  If
$(a,b)\in\mathsf{Sync}^{x,e}_p$, $\min\{u,v\}<x$, and
$h^p(\{u,v\})=1-e$, then the terminal colors
$d_0=f_e^p(a,u)$ and $d_1=f_e^p(b,v)$ are distinct.}
\label{fig:g2}
\end{figure}
\end{definition}

Let $P$ be the poset of all good preconditions, ordered by coordinatewise
extension.  The next two lemmas establish the extension and amalgamation
properties needed for $P$.

\begin{lemma}[Fresh extensions]\label{lem:fresh}
Let $p$ be good, let $A^p\subseteq A\in[\omega_1]^{<\omega}$, and
let $h:[A]^2\to2$ extend $h^p$.  Then there is a map
$f:A^2\to\omega$ such that $(A,h,f)$ is a good extension of $p$.
\end{lemma}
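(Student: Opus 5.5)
The plan is to keep $f^p$ on $A^p\times A^p$ and give every new pair its own color. Fix an injection $g$ from $A^2\setminus(A^p\times A^p)$ into $\omega\setminus\operatorname{ran}(f^p)$, and let $f=f^p\cup g$. Put $q=(A,h,f)$. Clearly $q$ is a precondition and $q\le p$. The key feature is that every \emph{fresh} color (one in $\operatorname{ran}(g)$) is used on exactly one ordered pair. Every \emph{old} color is used only on pairs from $A^p\times A^p$, where $f$ agrees with $f^p$. The same holds for $f_e^q$, with the pairs reversed when $e=1$.

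\emph{(G1).} Suppose $f(a_0,a_1)=f(b_0,b_1)$ with $a_0\ne b_0$. If the common color were fresh, then $(a_0,a_1)=(b_0,b_1)$, which is impossible. So the color is old, all four vertices lie in $A^p$, and $h$ agrees with $h^p$ on the relevant pairs. Hence (G1) for $q$ follows from (G1) for $p$.

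\emph{Analysis of $\mathsf{Sync}^{x,e}_q$.} Take synchronized walks $a_0,\dots,a_k$ and $b_0,\dots,b_k$ in $q$ witnessing $(a,b)\in\mathsf{Sync}^{x,e}_q$. Look at a step $j$ and its common color. If the color is fresh, then $a_j=b_j$ and $a_{j+1}=b_{j+1}$. If the color is old, then $a_j,b_j,a_{j+1},b_{j+1}\in A^p$. So if $a\ne b$, let $j_0$ be the last index with $a_{j_0}=b_{j_0}$, and put $y=a_{j_0}$. Every step from $j_0$ on has an old color, so $y$ and all later vertices lie in $A^p$ and are $\ge x$.

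Now let $x'$ be the least element of $A^p$ that is $\ge x$. It exists because $y$ qualifies. Every vertex of the walks after $j_0$ lies in $A^p$ and is $\ge x$, so it is $\ge x'$. Moreover $y\ge x'$, so the identical one-step walks $x',y$ give $(y,y)\in\mathsf{Sync}^{x',e}_p$. Continuing along the old tail of the walks then gives $(a,b)\in\mathsf{Sync}^{x',e}_p$. By the same reasoning, if $a=b\in A^p$ then $(a,a)\in\mathsf{Sync}^{x',e}_p$ for this $x'$.

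\emph{(G2).} Suppose $(a,b)\in\mathsf{Sync}^{x,e}_q$, $u\ne v$, $\min\{u,v\}<x$, $h(\{u,v\})=1-e$, and $f_e^q(a,u)=f_e^q(b,v)$. The common color cannot be fresh. Indeed, that would force $a=b$ and $u=v$, but $u\ne v$. So the color is old, and $a,b,u,v\in A^p$. By the previous paragraph, $(a,b)\in\mathsf{Sync}^{x',e}_p$, where $x'$ is the least element of $A^p$ that is $\ge x$. Since $\min\{u,v\}\in A^p$ and $\min\{u,v\}<x\le x'$, condition (G2) for $p$ applies with base $x'$ and gives a contradiction.

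I expect the main obstacle to be this reduction. Walks in $q$ may pass through new vertices and may use a base $x\notin A^p$, while $p$'s (G2) only speaks about bases in $A^p$. The choices of $j_0$ and $x'$ are designed to handle exactly this.
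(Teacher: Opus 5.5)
Your proof is correct and is essentially the paper's argument. It uses injective fresh colors outside $\operatorname{ran}(f^p)$, proves (G1) by reducing to old pairs, and proves (G2) by cutting the walks at the last common vertex and re-rooting the old tail at a base point of $A^p$ that lies between $x$ and the tail. The only difference is cosmetic: the paper re-roots at the least vertex of the terminal segment (and at $a$ itself when $a=b$), whereas you re-root at the least element of $A^p$ that is $\geq x$, which handles both cases uniformly.
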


\begin{proof}
Extend $f^p$ to $f:A^2\to\omega$ by giving the members of
$A^2\setminus(A^p)^2$ pairwise distinct values outside
$\operatorname{ran}(f^p)$, and set $q=(A,h,f)$.  Call the ordered
pairs in $(A^p)^2$ \emph{old} and those in
$A^2\setminus(A^p)^2$ \emph{new}.  The choice of $f$ gives the
following \emph{freshness property}:
\begin{displaymath}
 s\in A^2,\quad t\in A^2\setminus(A^p)^2,
 \quad\text{and}\quad f(s)=f(t)
 \quad\Longrightarrow\quad s=t.                       \tag{F}
\end{displaymath}
To verify (G1), suppose that two
vertices of $H_0^q\times H_1^q$ have the same $f$-color.  If either of
the corresponding ordered pairs is new, (F) says
that the two ordered pairs are identical, so they cannot be adjacent.
If both pairs belong to $(A^p)^2$, use (G1) for $p$.

We verify (G2).  By symmetry between the two coordinates---interchanging
$H_0$ and $H_1$ and transposing $f$---it is enough to consider $e=0$.
Fix a witnessing pair of synchronized walks for
$(a,b)\in\mathsf{Sync}^{x,0}_q$, and suppose, toward a contradiction,
that
\begin{equation}\label{eq:fresh-g2-counterexample}
 h(\{u,v\})=1,\quad \min\{u,v\}<x,\quad
 f(a,u)=f(b,v).
\end{equation}
Property (F) and $u\ne v$ imply that the ordered pairs $(a,u)$ and
$(b,v)$ both belong to $(A^p)^2$.  In particular
$a,b,u,v\in A^p$.

Let $a_0,\ldots,a_k$ and $b_0,\ldots,b_k$ be the two synchronized
walks.  By synchronization,
\begin{displaymath}
 f(a_j,a_{j+1})=f(b_j,b_{j+1})\qquad(j<k).
\end{displaymath}
If either $(a_j,a_{j+1})$ or $(b_j,b_{j+1})$ is new, (F) implies
$(a_j,a_{j+1})=(b_j,b_{j+1})$.  If $a=b$, then
$(a,a)\in\mathsf{Sync}^{a,0}_p$ and
$\min\{u,v\}<x\leq a$.  Thus (G2) for $p$, applied with base point
$a$, gives $f(a,u)\ne f(a,v)$, whereas
\eqref{eq:fresh-g2-counterexample}, since $a=b$, gives
$f(a,u)=f(a,v)$, a contradiction.
Assume that $a\ne b$, and let $j$ be the last index at which the two
walks agree.  Every step from index $j$ onward is old, and all vertices
in this terminal part of the walks belong to $A^p$.  Let $z$ be the
least ordinal occurring in this terminal part.  If $z=a_j=b_j$, use
the two terminal sequences.  Otherwise use
\begin{displaymath}
 z,a_j,a_{j+1},\ldots,a_k=a
 \quad\text{and}\quad
 z,b_j,b_{j+1},\ldots,b_k=b.
\end{displaymath}
The first steps of these two sequences are identical, and the remaining
corresponding steps have the same $f^p$-colors.  Thus in either case
\begin{displaymath}
 (a,b)\in\mathsf{Sync}^{z,0}_p.
\end{displaymath}
All vertices of the terminal part are at least $z$, while
$\min\{u,v\}<x\leq z$.  Now (G2) for $p$ again contradicts
\eqref{eq:fresh-g2-counterexample}.
\end{proof}

We next isolate the amalgamation used both for the chain condition and
for the chromatic-number argument.

\begin{definition}[Twins and separated twins]\label{def:twins}
Two good preconditions $p$ and $q$
are called \emph{twins} if $|A^p|=|A^q|$ and the increasing bijection
$\sigma:A^p\to A^q$ fixes $R=A^p\cap A^q$ pointwise and is an
isomorphism, that is,
\begin{displaymath}
 h^q(\{\sigma(a),\sigma(b)\})=h^p(\{a,b\}),
 \qquad
 f^q(\sigma(a),\sigma(b))=f^p(a,b).
\end{displaymath}
Write $D_p=A^p\setminus R$ and $D_q=A^q\setminus R$.
The twins are \emph{separated} if
\begin{displaymath}
 R<D_p<D_q,
\end{displaymath}
where $X<Y$ means that every member of $X$ is smaller than every
member of $Y$.
\end{definition}

\begin{lemma}[Amalgamation lemma]\label{lem:amalgamation}
Suppose that $p$ and $q$ are separated twins and that $D_p\ne\varnothing$.
Let $x=\min D_p$.  For each $i<2$ there is a good precondition
$r\leq p,q$ such that
\begin{displaymath}
 h^r(\{x,\sigma(x)\})=i.
\end{displaymath}
\end{lemma}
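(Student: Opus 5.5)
We will build $r$ on $A=A^p\cup A^q$ directly, in the spirit of Lemma~\ref{lem:fresh}. Put $h^r\supseteq h^p\cup h^q$. On the cross pairs $\{d,d'\}$ with $d\in D_p$ and $d'\in D_q$, set $h^r(\{x,\sigma(x)\})=i$. For the other cross pairs, the plan is to copy the pattern of $p$: let $h^r(\{d,d'\})=h^p(\{d,\sigma^{-1}(d')\})$ when $d\ne\sigma^{-1}(d')$, and for the diagonal pairs $\{d,\sigma(d)\}$ with $d\ne x$ choose any value, say $i$. For $f^r$, extend $f^p\cup f^q$; these agree on $R^2$ because $\sigma$ fixes $R$. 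Give every cross ordered pair in $(D_p\times D_q)\cup(D_q\times D_p)$ a pairwise distinct fresh color outside $\operatorname{ran}(f^p)\cup\operatorname{ran}(f^q)=\operatorname{ran}(f^p)$. The pairs in $A^p\times A^p$ and $A^q\times A^q$ keep their old colors, and these two blocks share colors through the isomorphism $\sigma$. The freshness property (F) then holds for cross pairs: a cross pair shares its color only with itself.

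To verify (G1), suppose $f^r(a_0,a_1)=f^r(b_0,b_1)$ with $h^r(\{a_0,b_0\})=0$ and $h^r(\{a_1,b_1\})=1$. By (F) neither ordered pair is a cross pair, so $(a_0,a_1)$ and $(b_0,b_1)$ each lie in $(A^p)^2$ or in $(A^q)^2$. If both lie in the same block, (G1) for $p$ or $q$ applies. Otherwise, say $(a_0,a_1)\in(A^p)^2$ and $(b_0,b_1)\in(A^q)^2$. Pull $(b_0,b_1)$ back by $\sigma^{-1}$, which preserves $f$. The edge types $h^r(\{a_0,b_0\})$ and $h^p(\{a_0,\sigma^{-1}b_0\})$ agree because of the copying rule. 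The one exception is a diagonal pair $b_0=\sigma(a_0)$, where the pulled-back pair degenerates: then $\sigma^{-1}b_0=a_0$, and the equality of colors with $a_0$ fixed is impossible only if $a_1\ne\sigma^{-1}b_1$. This forces a case analysis. If $a_0=\sigma^{-1}b_0$ and $a_1=\sigma^{-1}b_1$, the two vertices are $(a_0,a_1)$ and $(\sigma a_0,\sigma a_1)$, both pairs are diagonal, and they require $h$ values $0$ and $1$. This is where the diagonal values must be chosen carefully. I expect the correct choice is to make the diagonal pairs other than $\{x,\sigma x\}$ avoid such conflicts. Alternatively, one notes that a conflict needs both coordinates in $D_p$, and then uses minimality of $x$ together with (G2) to rule it out.

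The real work, and the expected main obstacle, is (G2). Synchronized walks can now pass between the $p$-block and the $q$-block using equal colors, because $f^q\circ\sigma=f^p$. The plan is the same as in Lemma~\ref{lem:fresh}. Fresh cross steps force identical steps in both walks. Cutting at the last agreement point, and re-basing at the least vertex $z$ of the terminal segment, leaves walks whose steps all lie inside $(A^p)^2\cup(A^q)^2$. Then apply the projection $\pi=\mathrm{id}\cup\sigma^{-1}:A\to A^p$, which preserves $f$ on each block. Because of the separation $R<D_p<D_q$, this projection preserves the order relations to the base point. It sends the synchronized walks to synchronized walks in $p$, and it sends the offending $(u,v)$ to a pair on which (G2) for $p$ applies. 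The delicate case is again $\pi(u)=\pi(v)$, that is $v=\sigma(u)$, which happens exactly for a diagonal cross pair. Here $\min\{u,v\}=u<z$ lies in $D_p$, so $z>u\ge x$ and the walks live above $x$, in $D_p\cup D_q$. This situation should be excluded using $(a,a')$ synchronization at base point $u$ in $p$, where $a'$ is the projection, together with (G2) for $p$ at a point of $R$ or at $x$. If this fails, I would instead give the diagonal pairs values that are chosen case by case to defeat exactly these configurations. The choice at $\{x,\sigma x\}$ is free because $x$ is the minimum of $D_p$.
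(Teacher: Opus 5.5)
\textbf{There is a genuine gap, and it is in (G1).} Copying the $p$-pattern on every non-diagonal cross pair is incompatible with the prescribed value $h^r(\{x,\sigma(x)\})=i$.

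The trouble is the mixed case $b_0=\sigma(a_0)$, $b_1\neq\sigma(a_1)$, which you pass over. There the equality $f^p(a_0,a_1)=f^p(a_0,\sigma^{-1}(b_1))$ is perfectly possible in a good $p$. (G1) for $p$ says nothing when the first coordinates coincide. (G2) for $p$ excludes it only when $\min\{a_1,\sigma^{-1}(b_1)\}<a_0$, so appealing to minimality of $x$ and (G2) cannot close this case.

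Here is a concrete failure. Let $R=\varnothing$, $A^p=\{0,1,2\}$, $h^p(\{1,2\})=1$, $h^p(\{0,1\})=h^p(\{0,2\})=0$, $f^p(0,1)=f^p(0,2)$, and let all other values of $f^p$ be pairwise distinct. One checks directly that $p$ is good. The only repeated color sits on two product vertices with equal first coordinate, and it can enter an instance of (G2) only with base point $0$, where (G2) is vacuous. Let $q$ be the separated twin of $p$ on $\{3,4,5\}$, so $x=0$ and $\sigma(x)=3$. For $i=0$ your $r$ gives
\[
 h^r(\{0,3\})=0,\qquad h^r(\{1,5\})=h^p(\{1,2\})=1,\qquad
 f^r(0,1)=f^p(0,2)=f^q(3,5)=f^r(3,5).
\]
So $(0,1)$ and $(3,5)$ are adjacent in $H_0^r\times H_1^r$ and receive the same color. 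The clash uses only the prescribed pair $\{x,\sigma(x)\}$ and a non-diagonal cross pair. Hence no choice of the other diagonal values repairs it, and the value at $\{x,\sigma(x)\}$ is not free. The case $i=1$ fails symmetrically, after transposing $f$ and exchanging $0$ and $1$.

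\textbf{The missing idea.} Prescribing $h^r(\{x,\sigma(x)\})=0$ must be propagated along synchronized walks from $x$. Let $\pi$ be the identity on $A^p$ and $\sigma^{-1}$ on $D_q$. The paper colors $0$ every cross pair $\{a,b\}$ with $a\in D_p$, $b\in D_q$ and $(a,\pi(b))\in\mathsf{Sync}^{x,0}_p$, and copies the $p$-pattern only on the remaining cross pairs. This set contains all diagonal pairs and is closed under one further synchronized step, which is exactly what (G1) needs.

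Suppose the $0$-edge $\{a_0,b_0\}$ of a clashing pair is special and $f^p(a_0,a_1)=f^p(\pi(b_0),\pi(b_1))$. There are two cases:
\begin{enumerate}
\item If $a_1,\pi(b_1)\geq x$, appending one step makes $\{a_1,b_1\}$ special as well, hence colored $0$ rather than $1$.
\item If $\min\{a_1,\pi(b_1)\}<x$, then (G2) for $p$ at base $x$ gives the contradiction.
\end{enumerate}
This is why (G2) is formulated with synchronized walks rather than with single base points.

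Your (G2) outline, by contrast, is not where the danger lies. In a putative counterexample to (G2) in $r$, freshness together with the block analysis of the walks shows that $\{u,v\}$ is never a cross pair. So (G2) does not see the cross values of $h^r$ at all, and your worry about $v=\sigma(u)$ cannot arise.
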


\begin{proof}
We prove the case $i=0$; the case $i=1$ is analogous.

Put $A^r=A^p\cup A^q$, and define the projection $\pi:A^r\to A^p$ by
\begin{displaymath}
 \pi\restr A^p=\operatorname{id}_{A^p},
 \qquad \pi\restr D_q=\sigma^{-1}.
\end{displaymath}
Let
\begin{equation}\label{eq:special-cross-edges}
 \mathcal F=
 \bigl\{\{a,b\}:a\in D_p, b\in D_q,
        (a,\pi(b))\in\mathsf{Sync}^{x,0}_p\bigr\}.
\end{equation}
On $[A^p]^2\cup[A^q]^2$ let $h^r$ agree with $h^p\cup h^q$.
For $a\in D_p$ and $b\in D_q$, put
\begin{equation}\label{eq:amalgam-h}
 h^r(\{a,b\})=
 \begin{cases}
 0,&\{a,b\}\in\mathcal F,\\
 h^p(\{a,\pi(b)\}),&\{a,b\}\notin\mathcal F.
 \end{cases}
\end{equation}
Formula~\eqref{eq:amalgam-h} is well-defined.  Indeed, if $a=\pi(b)$,
the identical
walk from $x$ to $a$ puts $\{a,b\}$ in $\mathcal F$, so the first line
applies.  Otherwise $\{a,\pi(b)\}\in[A^p]^2$, and the second line is
defined.

Let $f^r$ extend $f^p\cup f^q$, and give all ordered pairs in
\begin{equation}\label{eq:new-ordered-pairs}
 (A^r)^2\setminus\bigl((A^p)^2\cup(A^q)^2\bigr)
\end{equation}
pairwise distinct values outside $\operatorname{ran}(f^p)$.
The functions $f^p$ and $f^q$ agree on their common domain, so $f^r$ is
well-defined.  Call the ordered pairs in
\eqref{eq:new-ordered-pairs} \emph{new} and all other
ordered pairs \emph{old}.  Since $p$ and $q$ are twins,
$\operatorname{ran}(f^q)=\operatorname{ran}(f^p)$; hence $f^r$ has the
analogous freshness property:
\begin{displaymath}
 \begin{aligned}
 s&\in(A^r)^2,\\
 t&\in(A^r)^2\setminus\bigl((A^p)^2\cup(A^q)^2\bigr),
 \quad\text{and}\quad f^r(s)=f^r(t)
 \quad\Longrightarrow\quad s=t.
 \end{aligned}                                           \tag{$F_r$}
\end{displaymath}
Set $r=(A^r,h^r,f^r)$.  Since
$(x,x)\in\mathsf{Sync}^{x,0}_p$,
\eqref{eq:special-cross-edges}--\eqref{eq:amalgam-h} give
$h^r(\{x,\sigma(x)\})=0$.

We first prove (G1).  Suppose otherwise.  Thus, for suitable
$a_0,a_1\in A^r$ and $b_0,b_1\in A^r$,
\begin{equation}\label{eq:g1-counterexample}
 h^r(\{a_0,b_0\})=0,\qquad
 h^r(\{a_1,b_1\})=1,\qquad
 f^r(a_0,a_1)=f^r(b_0,b_1).
\end{equation}
If either ordered pair in \eqref{eq:g1-counterexample} is new, $(F_r)$
forces the two ordered pairs to be identical, which is impossible.
If both ordered pairs belong to $(A^p)^2$, or both belong to
$(A^q)^2$, use (G1) in that condition.  We may therefore assume,
after interchanging the two product vertices if necessary, that
$(a_0,a_1)\in(A^p)^2$ and $(b_0,b_1)\in(A^q)^2$.
Put $b'_j=\pi(b_j)$ for $j<2$.  Then
\begin{equation}\label{eq:projected-color-equality}
 f^p(a_0,a_1)=f^p(b'_0,b'_1).
\end{equation}

For $j<2$, call the edge $\{a_j,b_j\}$ \emph{special} if
$\{a_j,b_j\}\in\mathcal F$.  A nonspecial edge has the same color as the genuine
edge $\{a_j,b'_j\}$ of $p$; its projected endpoints are distinct,
since equality would make the edge special by the diagonal observation
above.  If neither
edge is special,
\eqref{eq:g1-counterexample}--\eqref{eq:projected-color-equality}
contradict (G1) for $p$.  Every special edge has color $0$, so the only
remaining possibility is that the $0$-th edge is special.  Hence
\begin{equation}\label{eq:special-edge-data}
 (a_0,b'_0)\in\mathsf{Sync}^{x,0}_p,\qquad
 h^p(\{a_1,b'_1\})=1.
\end{equation}
Let
\begin{displaymath}
 c_0,\ldots,c_k=a_0\quad\text{and}\quad
 d_0,\ldots,d_k=b'_0
\end{displaymath}
be synchronized walks from $x$ witnessing the first assertion in
\eqref{eq:special-edge-data}.  If $a_1,b'_1\geq x$, extend these walks
by appending $a_1$ to the first and $b'_1$ to the second.  By
\eqref{eq:projected-color-equality}, the two added steps have the same
$f^p$-color.  The extended walks therefore witness
$(a_1,b'_1)\in\mathsf{Sync}^{x,0}_p$.  Since $b'_1=\pi(b_1)$,
\eqref{eq:special-cross-edges} gives $\{a_1,b_1\}\in\mathcal F$; hence
the other edge is also special, a contradiction.  Otherwise
$\min\{a_1,b'_1\}<x$, and (G2) for $p$, applied to
\eqref{eq:special-edge-data}, contradicts
\eqref{eq:projected-color-equality}.  Thus (G1) holds in $r$.

It remains to prove (G2).  Recall that $x=\min D_p$ is fixed throughout
the construction; we use $y$ for the arbitrary base point occurring
in (G2).  It is enough to consider $e=0$; the case $e=1$ is analogous.
Suppose that
\begin{equation}\label{eq:g2-counterexample-data}
 (a,b)\in\mathsf{Sync}^{y,0}_r,\qquad h^r(\{u,v\})=1,\qquad
 \min\{u,v\}<y,
\end{equation}
but
\begin{equation}\label{eq:g2-counterexample-color}
 f^r(a,u)=f^r(b,v).
\end{equation}
By $(F_r)$ and $u\ne v$, the ordered pairs $(a,u)$ and $(b,v)$ are
both old: otherwise \eqref{eq:g2-counterexample-color} would make them
identical and hence give $u=v$.
Let $a_0,\ldots,a_k=a$ and $b_0,\ldots,b_k=b$ be synchronized walks
from $y$ witnessing the first assertion in
\eqref{eq:g2-counterexample-data}.  For every $j<k$,
\begin{displaymath}
 f^r(a_j,a_{j+1})=f^r(b_j,b_{j+1}).
\end{displaymath}
If either $(a_j,a_{j+1})$ or $(b_j,b_{j+1})$ is new, $(F_r)$ implies
that these two ordered pairs are identical.

There are three cases.  If $y\in R$, project both witnessing walks by
$\pi$.  Fix $j<k$.  If both $(a_j,a_{j+1})$ and
$(b_j,b_{j+1})$ are old, then the definition of $\pi$ and the fact that
$p$ and $q$ are twins give
\begin{displaymath}
 f^p(\pi(a_j),\pi(a_{j+1}))
 =f^p(\pi(b_j),\pi(b_{j+1})).
\end{displaymath}
If either ordered pair is new, the preceding paragraph shows that the
two ordered pairs, and hence their projections, are identical.  Thus
the displayed equality holds for every $j<k$.  All projected vertices
are at least $y$, so
\begin{displaymath}
 (\pi(a),\pi(b))\in\mathsf{Sync}^{y,0}_p.
\end{displaymath}
The edge $\{u,v\}$ cannot join $D_p$ to $D_q$, since both sets lie
above every member of $R$, whereas $\min\{u,v\}<y\in R$.  Consequently
\begin{displaymath}
 h^p(\{\pi(u),\pi(v)\})=1,
 \qquad \min\{\pi(u),\pi(v)\}<y.
\end{displaymath}
Moreover, projecting \eqref{eq:g2-counterexample-color} gives
\begin{displaymath}
 f^p(\pi(a),\pi(u))=f^p(\pi(b),\pi(v)).
\end{displaymath}
Together with
$(\pi(a),\pi(b))\in\mathsf{Sync}^{y,0}_p$, these statements contradict
(G2) for $p$.

If $y\in D_q$, every vertex of the witnessing walks lies in $D_q$.
Thus the first assertion in \eqref{eq:g2-counterexample-data} already
holds in $q$.  Oldness of the two pairs in
\eqref{eq:g2-counterexample-color} implies $u,v\in A^q$, and (G2) for
$q$ gives the
contradiction.

Finally suppose that $y\in D_p$.  An induction on $j\leq k$ shows that
either $a_j=b_j$, or $a_j,b_j\in D_p$, or $a_j,b_j\in D_q$.  Indeed,
all vertices of the walks lie above $R$ and hence belong to
$D_p\cup D_q$.  If $(a_j,a_{j+1})$ is old, then $a_j$ and $a_{j+1}$
belong to the same one of $D_p,D_q$; the analogous statement holds for
$(b_j,b_{j+1})$.  If either ordered pair is new, $(F_r)$ makes the two
ordered pairs identical.  It follows from
\eqref{eq:g2-counterexample-color} that
$a,b,u,v$ all lie in the same
support, either $A^p$ or $A^q$.  Let $s\in\{p,q\}$ be the corresponding
condition.  Thus $\{u,v\}$ and equality
\eqref{eq:g2-counterexample-color} are inherited by $s$.
If $a=b$, apply (G2) in $s$
with base $a$, noting that $\min\{u,v\}<y\leq a$.  Suppose now that
$a\ne b$, and let
\begin{displaymath}
 j=\max\{\ell\leq k:a_\ell=b_\ell\}.
\end{displaymath}
Then $j<k$.  For every $j\leq\ell<k$, both
$(a_\ell,a_{\ell+1})$ and $(b_\ell,b_{\ell+1})$ are old; otherwise
$(F_r)$ would make the two ordered pairs identical, contradicting the
choice of $j$.  It follows from the induction above that all vertices
in the two terminal sequences
\begin{displaymath}
 a_j,a_{j+1},\ldots,a_k=a
 \quad\text{and}\quad
 b_j,b_{j+1},\ldots,b_k=b
\end{displaymath}
belong to $A^s$.  Let
\begin{displaymath}
 z=\min\{a_\ell,b_\ell:j\leq\ell\leq k\}.
\end{displaymath}
If $z=a_j=b_j$, use the two terminal sequences displayed above.
Otherwise use
\begin{displaymath}
 z,a_j,a_{j+1},\ldots,a_k=a
 \quad\text{and}\quad
 z,b_j,b_{j+1},\ldots,b_k=b.
\end{displaymath}
The first steps of these two sequences are identical, and the remaining
corresponding steps have equal $f^s$-colors.  Thus in either case the
resulting sequences are synchronized walks in $A^s$ witnessing
\begin{displaymath}
 (a,b)\in\mathsf{Sync}^{z,0}_s.
\end{displaymath}
Since $z\geq y$ and $\min\{u,v\}<y$, (G2) for $s$ contradicts
\eqref{eq:g2-counterexample-color}.  This proves (G2), and hence $r$
is good.
\end{proof}

\begin{proof}[Proof of Theorem~\ref{thm:main}]
We verify that $P$ has the properties asserted in the theorem.
The empty precondition is good.  Lemma~\ref{lem:fresh} shows that, for
every $\alpha<\omega_1$, the set
\begin{equation}\label{eq:dense-support}
 \{p\in P:\alpha\in A^p\}
\end{equation}
is dense.  It also shows that, for every $n<\omega$ and $i<2$, it is
dense to have an $n$-element subset of $A^p$ all of whose pairs receive
the $h^p$-color $i$.

Clearly $|P|=\omega_1$.  We check the c.c.c.  Given an uncountable
family of conditions, first thin it so that its supports are distinct;
this is possible because only countably many conditions have any one
fixed finite support.  Apply the $\Delta$-system lemma to the supports.
After thinning, they have a common finite root $R$ and pairwise
disjoint, nonempty petals.  Only countably many petals can meet
$\max(R)+1$, so we may discard them and assume that $R$ lies below
every petal (with the evident interpretation when $R=\varnothing$).
There are only countably many possibilities for the sizes of the
supports and for the finite $h$- and $f$-matrices.  We may therefore
thin again so that every increasing bijection between two supports
fixes $R$ and is an isomorphism.  Fix one petal $D_p$.  Only countably
many of the other pairwise disjoint petals meet $\max(D_p)+1$, so one
may choose a petal $D_q$ with $D_p<D_q$.  The corresponding conditions
are separated twins, so
Lemma~\ref{lem:amalgamation} gives a common extension.  Thus $P$ is
c.c.c.

Let $K\subseteq P$ be generic.  By~\eqref{eq:dense-support}, the unions
\begin{displaymath}
 h=\bigcup_{p\in K}h^p:[\omega_1]^2\longrightarrow2,
 \qquad
 f=\bigcup_{p\in K}f^p:\omega_1^2\longrightarrow\omega
\end{displaymath}
are total.  Put $E_i=h^{-1}\{i\}$ and $H_i=(\omega_1,E_i)$.
Then $H_1=\overline{H_0}$.  Requirement (G1) implies that $f$ is a
proper coloring of $H_0\times H_1$, so
\begin{equation}\label{eq:product-upper-bound}
 \chr(H_0\times H_1)\leq\omega.
\end{equation}
The dense sets mentioned after \eqref{eq:dense-support} ensure that
each $H_i$ contains
$K_n$ for every $n<\omega$.  Pairing a copy of $K_n$ in $H_0$ with a
copy in $H_1$ embeds $K_n$ into $H_0\times H_1$.  Hence equality holds
in~\eqref{eq:product-upper-bound}.

We finish by proving that $\chr(H_i)=\omega_1$ for $i<2$.  Suppose
that some $p_*\in P$ forces that
$\dot c:\omega_1\to\omega$ is a proper coloring of $H_i$.  For each
$\alpha$ in the stationary set of nonzero countable limit ordinals,
choose $p_\alpha\leq p_*$ such that $\alpha\in A^{p_\alpha}$ and
$p_\alpha$ decides $\dot c(\alpha)$.  The map
\begin{displaymath}
 \alpha\longmapsto
 \begin{cases}
 0,&A^{p_\alpha}\cap\alpha=\varnothing,\\
 \max(A^{p_\alpha}\cap\alpha)+1,&A^{p_\alpha}\cap\alpha\ne\varnothing
 \end{cases}
\end{displaymath}
is regressive.  By the pressing-down lemma it is constant on a
stationary set.  On that set the finite sets
$A^{p_\alpha}\cap\alpha$ are all contained in one countable ordinal;
there are only countably many possibilities for them.  Thus, after
thinning, they are a fixed finite set $L$.

Apply the $\Delta$-system lemma to the supports and thin once more.
Its root must equal $L$: after discarding boundedly many indices, every
root element lies below $\alpha$, while every member of $L$ belongs to
every support.  Thinning by the countably many finite matrix types and
by the countably many values of $\dot c(\alpha)$, we may arrange that
\begin{enumerate}
\item the $A^{p_\alpha}$ have the common root $R=L$ and form pairwise
isomorphic conditions;
\item $R<A^{p_\alpha}\setminus R$, and
$\alpha=\min(A^{p_\alpha}\setminus R)$;
\item all the conditions force the same value, say $m$, for
$\dot c(\alpha)$.
\end{enumerate}
Choose $\alpha<\beta$ in the remaining uncountable set so that the two
petals are successive blocks.  Their increasing isomorphism sends
$\alpha$ to $\beta$.  Lemma~\ref{lem:amalgamation}, used with the
prescribed color $i$, gives $r\leq p_\alpha,p_\beta$ such that
$h^r(\{\alpha,\beta\})=i$.  But
\begin{displaymath}
 r\Vdash \dot c(\alpha)=\dot c(\beta)=m,
\end{displaymath}
contrary to the asserted properness of $\dot c$.  Thus neither $H_i$
is countably chromatic.  Since $P$ is c.c.c.\ and each $H_i$ has vertex
set $\omega_1$, we have
$\chr(H_0)=\chr(H_1)=\omega_1$.  Taking $G=H_0$ completes the proof.
\end{proof}

\section*{Declaration of Generative AI and AI-assisted technologies in the writing process}

During the preparation of this work, the author used ChatGPT to
correct grammatical and typographical errors and misprints.  All
mathematical arguments were created by the author.  After using this
tool, the author reviewed and edited the content as needed and takes
full responsibility for the content of the publication.


\begin{thebibliography}{99}

\bibitem{CummingsEisworthMoore2024}
J.~Cummings, T.~Eisworth, and J.~T. Moore,
\emph{On minimal non-$\sigma$-scattered linear orders},
Adv. Math. \textbf{441} (2024), article 109540,
doi:10.1016/j.aim.2024.109540.

\bibitem{Hajnal1985}
A.~Hajnal,
\emph{The chromatic number of the product of two
$\aleph_1$-chromatic graphs can be countable},
Combinatorica \textbf{5} (1985), no.~2, 137--139,
doi:10.1007/BF02579376.

\bibitem{Hedetniemi1966}
S.~T. Hedetniemi,
\emph{Homomorphisms of graphs and automata},
University of Michigan Technical Report 03105--44--T, 1966.

\bibitem{Moore2007}
J.~T. Moore,
\emph{$\omega_1$ and $-\omega_1$ may be the only minimal uncountable
linear orders},
Michigan Math. J. \textbf{55} (2007), no.~2, 437--457,
doi:10.1307/mmj/1187647002.

\bibitem{NordhausGaddum1956}
E.~A. Nordhaus and J.~W. Gaddum,
\emph{On complementary graphs},
Amer. Math. Monthly \textbf{63} (1956), 175--177,
doi:10.2307/2306658.

\bibitem{PoljakRodl1981}
S.~Poljak and V.~R\"odl,
\emph{On the arc-chromatic number of a digraph},
J. Combin. Theory Ser. B \textbf{31} (1981), 190--198,
doi:10.1016/S0095-8956(81)80024-X.

\bibitem{Rinot2017}
A.~Rinot,
\emph{Hedetniemi's conjecture for uncountable graphs},
J. Eur. Math. Soc. \textbf{19} (2017), no.~1, 285--298,
doi:10.4171/JEMS/666.

\bibitem{Sauer2001}
N.~Sauer,
\emph{Hedetniemi's conjecture---a survey},
Discrete Math. \textbf{229} (2001), 261--292,
doi:10.1016/S0012-365X(00)00213-2.

\bibitem{Shalev2024}
R.~Shalev,
\emph{More minimal non-$\sigma$-scattered linear orders},
Eur. J. Math. \textbf{10} (2024), article no.~74,
doi:10.1007/s40879-024-00780-y.

\bibitem{Shitov2019}
Y.~Shitov,
\emph{Counterexamples to Hedetniemi's conjecture},
Ann.\ of Math. (2) \textbf{190} (2019), no.~2, 663--667,
doi:10.4007/annals.2019.190.2.6.

\bibitem{Soukup1988}
L.~Soukup,
\emph{On chromatic number of product of graphs},
Comment. Math. Univ. Carolin. \textbf{29} (1988), no.~1, 1--12.

\bibitem{Zhu2025}
X.~Zhu,
\emph{A survey on Hedetniemi's conjecture},
Comput. Sci. Rev. \textbf{58} (2025), Article 100794,
doi:10.1016/j.cosrev.2025.100794.

\end{thebibliography}
\end{document}